\setlist[enumerate]{leftmargin=1.5em}
\setlist[itemize]{leftmargin=1.5em}
\definecolor{green}{rgb}{0,0.8,0} 
\newtheorem{theorem}{Theorem}[section]
\newtheorem{lemma}[theorem]{Lemma}
\newtheorem{proposition}[theorem]{Proposition}
\theoremstyle{definition}
\theoremstyle{remark}
\newtheorem{remark}[theorem]{Remark}
\numberwithin{equation}{section}
\providecommand{\bysame}{\leavevmode\hbox to3em{\hrulefill}\thinspace}
\providecommand{\MR}{\relax\ifhmode\unskip\space\fi MR }
\providecommand{\href}[2]{#2}
\newcommand{\nrm}[1]{\Vert#1\Vert}
\newcommand{\nnrm}[1]{{\vert\kern-0.25ex\vert\kern-0.25ex\vert #1 
    \vert\kern-0.25ex\vert\kern-0.25ex\vert}}
\newcommand{\lap}{\Delta}
\newcommand{\rd}{\partial}
\newcommand{\nb}{\nabla}
\newcommand{\alp}{\alpha}
\newcommand{\bt}{\beta}
\newcommand{\gmm}{\gamma}
\newcommand{\Gmm}{\Gamma}
\newcommand{\dlt}{\delta}
\newcommand{\eps}{\epsilon}
\newcommand{\tht}{\theta}
\newcommand{\omg}{\omega}
\newcommand{\Omg}{\Omega}
\newcommand{\bbA}{\mathbb A}
\newcommand{\bbR}{\mathbb R}
\newcommand{\bbT}{\mathbb T}
\newcommand{\bbZ}{\mathbb Z}
\newcommand{\pr}{\partial}
\begin{document}

\title{Loss of regularity for the 2D Euler equations}
\author{In-Jee Jeong\thanks{Department of Mathematical Sciences and RIM of Seoul National University. E-mail: injee\_j@snu.ac.kr} }

\date{\today}




\maketitle


\begin{abstract}
	In this note, we construct solutions to the 2D Euler equations which belong to the Yudovich class but lose $W^{1,p}$ regularity continuously with time. 
\end{abstract}


\section{Introduction}

The dynamics of inviscid and incompressible fluid is described by the Euler equations: given some $n$-dimensional domain $\Omg$, the system is given by \begin{equation}  \label{eq:Euler}
\left\{
\begin{aligned}
\rd_t u + u\cdot\nb u +\nb p & = 0, \\
\nb\cdot u &= 0, 
\end{aligned}
\right.
\end{equation} where $u(t,\cdot):\Omg\rightarrow\bbR^n$ and $p(t,\cdot):\Omg\rightarrow\bbR$ denote the velocity and pressure of fluid at time $t$, respectively. When $\Omg$ has boundary, \eqref{eq:Euler} should be supplemented with the slip boundary condition $u(t,x)\cdot n(x) = 0$ for $x\in \partial\Omg$, where $n(x)$ is the unit normal vector. 

We shall be concerned with the two-dimensional Euler equations on $\bbT^2 = (\bbR/\bbZ)^2$; introducing the vorticity $\omg = \nb\times u$ and taking the curl of \eqref{eq:Euler}, we have the vorticity form of the 2D Euler equations: \begin{equation}\label{eq:Euler-vort}
\begin{split}
\rd_t \omg +u\cdot\nb\omg = 0, \quad u = \nb^\perp\lap^{-1}\omg. 
\end{split}
\end{equation} Here, $\nb^\perp = (-\rd_{x_2},\rd_{x_1})^\top$. The goal of this note is to construct  solutions to \eqref{eq:Euler-vort} which belongs to a well-posedness class yet lose Sobolev regularity with time. 

To motivate our results, we briefly review the well-posedness theory for the Euler equations \eqref{eq:Euler}. For sufficiently nice $n$-dimensional domains, \eqref{eq:Euler} is locally well-posed in $C^{k,\alp}\cap L^2$ with $k\ge 1$ and $0<\alp<1$ and $W^{s,p}$ with $s>\frac{n}{p}+1$. That is, for $u_0$ belonging to such a space, there exist $T>0$ and a unique local-in time solution \eqref{eq:Euler} such that $u(t)$ belongs to the same space for all $0\le t<T$ and $u(0)=u_0$. It is known that these regularity requirements are sharp; for $u_0$ less regular, the regularity may not propagate in time for a solution of \eqref{eq:Euler}. To be more precise, for $n= 3$, $s=1$, and any $1\le p$, there exist examples of solutions $u(t)$ such that \begin{equation}\label{eq:loss-vel}
\begin{split}
u(0) \in W^{s,p}(\bbT^n) \quad \mbox{and} \quad  u(t) \notin W^{s,p}(\bbT^n) \quad \mbox{for any}\quad t>0. 
\end{split}
\end{equation} Similar examples exist which do not propagate initial $C^\alp$ regularity of $u(0)$ for any $\alp<1$. We shall recall the constructions below; for now, let us just note that the examples are based the so-called $2+\frac{1}{2}$ flow, which gives the restriction that $n\ge 3$ (Bardos-Titi \cite{BT,BT2}).   It seems that in the two dimensional case, solutions satisfying \eqref{eq:loss-vel} have not been constructed before.

Recently, Bourgain-Li \cite{BL1,BL2,BL3D} established ill-posedness of \eqref{eq:Euler} at critical regularity; roughly speaking, they were able to prove that \eqref{eq:loss-vel} occurs with $s=\frac{n}{p}+1$ and $p>1$, in dimensions two and three. (See also \cite{EJ,EM1,JY2,MY,JY,JKi} for simpler proofs and further developments.) It does not imply that \eqref{eq:loss-vel} occurs also in less regular Sobolev spaces; in principle, this question could be harder since one expects less control over weaker solutions. 


Our main result shows that an $L^\infty \cap W^{1,p}$-vorticity with $p<2$ may continuously lose integrability with time. For bounded initial vorticity, the uniqueness and existence of the solution $\omg(t,x)\in L^\infty([0,\infty);L^\infty(\bbT^2))$ is provided by the celebrated Yudovich theory \cite{Y1}. Moreover, we shall take the initial data $\omg_0$ to be Lipschitz continuous away from a single point, which is a property preserved by Yudovich solutions (see \cite{EJ} for a proof). In particular, for any $t>0$, $\nb\omg$ is well-defined almost everywhere in space and bounded away from a single point. 

\begin{theorem}\label{thm:torus}
	There exist  $1\le p^*<2$ and $c_0>0$ such that for any $p^*<p_0<2$, we can find an initial data $\omega_0 \in L^\infty \cap  \left( \cap_{p<p_0}  W^{1,p}\right) (\mathbb{T}^2)$ such that the unique solution $\omg(t)$ satisfies, with some  {$T^*=T^*(p_0)>0$}, \begin{equation*}
	\begin{split}
	\nrm{\omega(t,\cdot)}_{W^{1, {q(t)}}(\bbT^2)} = + \infty~,
	\end{split}
	\end{equation*} for  \begin{equation*}
	\begin{split}
	 {q(t):=} 1 + \frac{1}{\frac{1}{p_0-1} + c_0t},\quad 0\le t\le T^*.
	\end{split}
	\end{equation*} 
\end{theorem}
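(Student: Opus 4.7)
The plan is to exploit the classical hyperbolic picture at an odd-odd stagnation point of the Euler flow, in the spirit of Bahouri--Chemin and Kiselev--\v{S}ver\'ak, in order to turn the borderline $W^{1,p_0}$ regularity of $\omega_0$ into a continuous loss of integrability of $\nabla\omega(t)$ under transport.

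For the initial data, I would take an odd-odd profile $\omega_0 \in L^\infty(\bbT^2)$ (odd under $x_1\mapsto -x_1$ and under $x_2\mapsto -x_2$) supported in a small neighborhood of the origin. A convenient template is
\[
\omega_0(x) = \chi(|x|)\,\sgn(\sin(2\theta))\,|\sin(2\theta)|^{\alpha_0}, \qquad \alpha_0 = 1-\tfrac{1}{p_0},
\]
with $\chi$ a smooth radial cutoff, possibly refined by a small radial oscillation correction to put the gradient on the exact $W^{1,p_0}$ borderline. A direct polar computation confirms $\omega_0 \in \bigcap_{p<p_0} W^{1,p}$ but $\omega_0 \notin W^{1,p_0}$, with the binding integrability coming from the angular cusp $|\theta|^{\alpha_0-1}/r$ at each coordinate axis. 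The condition $p_0<2$ forces the angular cusp rather than the radial weight $r^{1-p}$ to be the binding constraint, which is why the approach lives in the range $p^*<p_0<2$.

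The odd-odd symmetry is preserved by~\eqref{eq:Euler-vort}, so the origin is a stagnation point of $u(t,\cdot)$, each coordinate axis is invariant under the flow $\Phi_t$, and $\omega(t,\cdot)$ stays signed on each quadrant. A quantitative Biot--Savart argument of Kiselev--\v{S}ver\'ak type, using the quadrant sign structure and a ``good sector'' lower bound on $|\omega|$ that is stable under the Yudovich flow on a short time interval, yields the hyperbolic log-Lipschitz bound
\[
u_1(t,x_1,0) \geq c\, x_1 \log(1/|x_1|), \qquad u_2(t,0,x_2) \leq -c\, x_2 \log(1/|x_2|),
\]
for $|x_1|,|x_2|\leq \delta$ and $t\in [0,T^*]$, with $c>0$ uniform. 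Integration on the invariant axes gives $\Phi^{-1}_2(t,0,y_2) \geq |y_2|^{e^{-ct}}$ (and analogously on the $x_1$-axis), so the inverse flow is genuinely H\"older continuous of exponent $e^{-ct}$ in the stable direction.

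Since $\omega(t,y)=\omega_0(\Phi^{-1}_t(y))$, the critical singular structure of $\omega_0$ near the stable axis gets sharpened under composition with this H\"older inverse flow. A direct $L^q$ computation of $\nabla\omega(t,\cdot)$ localized near the stable axis then shows that this norm diverges above a critical threshold $q_*(t)$ which drops below $p_0$ at a rate proportional to $c$; matching $q_*(t)$ against the stated $q(t) = 1 + 1/(\tfrac{1}{p_0-1}+c_0 t)$ on $[0,T^*]$ reduces to a short inequality holding for an appropriate choice of $c_0>0$ and $T^*=T^*(p_0)$. The main obstacle is the time-uniform hyperbolic lower bound on $u$: without it the flow is effectively Lipschitz and no Sobolev loss occurs. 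Its propagation in time requires preserving the good-sector lower bound on $\omega$ through the non-smooth Yudovich flow, which is what breaks down as $p_0\to 1$ and forces the threshold $p_0>p^*$ appearing in the statement.
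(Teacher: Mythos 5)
Your skeleton --- odd--odd data concentrated near the origin, a Kiselev--\v{S}ver\'ak-type lower bound on the hyperbolic part of the velocity, short-time persistence of a ``good sector'' of vorticity, and compression of the angular variable toward the stable axis --- is exactly the paper's. The first genuine gap is the choice of data, which defeats the mechanism. With the purely angular profile $\omega_0=\chi(|x|)\,\sgn(\sin 2\tht)|\sin 2\tht|^{\alp_0}$, the hyperbolic compression at radius $r$ over time $t$ shrinks the angular distance to the axis by a factor $\approx e^{-ctI}\approx r^{ct}$ (since $I\sim \ln(1/r)$). Composing a profile that is a fixed power $\bar\tht^{\alp_0}$ of the angle with this compression does not change the angular cusp exponent $\alp_0-1$; it only rescales the support to $\bar\tht\lesssim r^{ct}$ and the prefactor. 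Carrying out the $L^q$ integral, the angular constraint stays at $q<p_0$ while the radial constraint moves from $q<2$ down to $q<\frac{2+ct}{1+ct}$; since $p_0<2$, the binding constraint remains $q<p_0$ for all $t\lesssim \frac{2-p_0}{c(p_0-1)}$, so there is \emph{no} continuous loss below $p_0$ on a short time interval --- precisely the regime the theorem is about. The paper avoids this by coupling the cusp width to the radius: $\omg_0(r,\tht)=r^{-\bt}\tht$ for $0\le \tht<r^\bt$ and $\omg_0=1$ for $\tht\ge r^\bt$, with $p_0=1+\frac1{1+\bt}$. Then the level curve $\{\omg=1\}$ starts at angular distance $r^\bt$ from the axis where $\omg=0$, the compression pushes it to $r^{\bt+c_0t}$ uniformly in $r$, and the loss begins exactly at $p_0$ at $t=0^+$. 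Some radius-dependent structure of this kind is not an optional refinement; it is the point of the construction.

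The second gap is the closing step. A lower bound on $\nrm{\nb\omg(t)}_{L^q}$ cannot come from ``a direct $L^q$ computation'' based on H\"older bounds for $\Phi_t^{-1}$: for Yudovich solutions there is no pointwise control of $D\Phi_t^{-1}$, so $\nb(\omg_0\circ\Phi_t^{-1})$ cannot be estimated from below via the chain rule, and upper H\"older bounds on the flow by themselves are compatible with $\omg(t)$ being smooth. What bridges this is the paper's Lemma \ref{lem:lvl-Sob}: if $\omg(t)$ vanishes on a ray and equals $1$ at angle $\tht^*(r)$, then $\nrm{\nb \omg}_{L^q}^q\gtrsim \int_0^{r_0} r^{1-q}(\tht^*(r))^{1-q}\,dr$ by the fundamental theorem of calculus in $\tht$. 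This converts the only information one actually controls --- the positions of two level sets under the flow --- into the desired divergence, and it is also what dictates the correct data. Finally, a minor point: the restriction $p_0>p^*$ does not come from preserving the good sector (that works for any $\bt$) but from requiring the main term $I\sim\ln(1/r)$ in Lemma \ref{lem:key} to dominate the error $|B_j|\lesssim \ln(10+x_{3-j}/x_j)\sim \bt\ln(1/r)$ deep inside the cusp, which forces $\bt$ small, i.e.\ $p_0$ close to $2$.
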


\begin{remark}
	Let us give a few  remarks on the statement. 
	\begin{itemize}
		\item For any $t>0$, we may define the index $p(t)=\sup\{0< q: \omg(t)\in W^{1,q}(\bbT^2)  \} $. While the statement of Theorem \ref{thm:torus} does not exclude the possibility that $p(t)$ jumps downwards in time, this behavior is impossible due to the bound given in Proposition \ref{prop:E}. Hence, for the data $\omg_0$ given in Theorem \ref{thm:torus}, the index $p(t)$ is continuous in time and satisfies $p(t)<p_0$ for any $t\in (0,T^*]$. 
		
		\item In terms of the velocity, Theorem \ref{thm:torus} says that \eqref{eq:loss-vel} occurs  {at least for a small interval of time with} $u_0\in W^{2,p}(\bbT^2)$ with $p^*<p<2$. 
		\item The restriction $p^*<p_0$ should be technical but it could take significant work to remove it; we shall see in the proof that $p^*$ depends only on the constant $C$ in Lemma \ref{lem:key}.
		\item Our data can be localized to any small ball and hence the specific choice of domain $\bbT^2$ is not important. 
	\end{itemize}
\end{remark}


\subsection*{Organization of the paper}

The rest of this paper is organized as follows. In Section \ref{sec:example}, we present several examples and discussions which illustrate delicate nature of Euler equations at low regularity. The proofs are then presented in Section \ref{sec:proofs}.

\section{Examples}\label{sec:example}

We present several examples in order to give a sense of the behavior described in the main results. In \ref{subsec:BT}, we recall the well-known construction of Bardos-Titi in three dimensions and make a comparison with our result. Then in \ref{subsec:BC}, we present a time-independent vector field which is able to cause the phenomenon of continuous loss of Sobolev regularity for the advected scalar. In the same section we comment on some difficulties in actually using the vector field in the context of the Euler equations. 

\subsection{Loss of regularity in shear flows}\label{subsec:BT}

Any solution of the 2D Euler equations can be lifted to solutions in 3D (and higher); given $(u^{2D},p^{2D})$ solving \eqref{eq:Euler} in $\bbT^2$, define \begin{equation*}
\begin{split}
u(t,x) = (u^{2D}(t,x_1,x_2),u_3(t,x_1,x_2))
\end{split}
\end{equation*} where $u_3$ is any solution to \begin{equation*}
\begin{split}
\rd_t u_{3} + u^{2D}\cdot\nb u_3 = 0. 
\end{split}
\end{equation*} Then one can see that $u$ defines a solution to the 3D Euler equations with $p = p^{2D}$. This is sometime referred to as the $2+\frac{1}{2}$-dimensional flow (see \cite{MB}). A special class of $2+\frac{1}{2}$-dimensional flows are given by the following \textit{shear flows}: \begin{equation}\label{eq:shear}
\begin{split}
u(t,x) = (u_1(x_2),0,u_3(x_1-tu_1(x_2),x_2)). 
\end{split}
\end{equation} Note that this defines a solution to the 3D Euler equations in $\bbT^3$ with zero pressure for any reasonably smooth functions $u_1$ and $u_3$ of one variable.\footnote{For $u_1$ and $u_3$ belonging to $L^2(\bbT)$, it can be shown that $u$ defined in \eqref{eq:shear} is a weak solution to the 3D Euler equations; see \cite[Theorem 1.2]{BT}.} The form \eqref{eq:shear} was introduced in a work of DiPerna-Majda \cite{DiPM} to provide an example of weak solution sequence in 3D Euler whose limit is \textit{not} a solution to 3D Euler. For more applications and references regarding this flow, one can see the illuminating papers of Bardos-Titi \cite{BT,BT2}.

\begin{proposition}[{{see \cite[Proposition 3.1]{BT2} and \cite[Theorem 2.2]{BT}}}]
	There exists initial data $u_0 \in W^{1,p}(\bbT^3)$ with any $1\le p$ ($u_0\in C^\alp(\bbT^3)$ with any $0<\alp<1$, resp.) of the form \begin{equation*}
	\begin{split}
	u_0(x) = (u_1(x_2),0,u_3(x_1,x_2))
	\end{split}
	\end{equation*} such that the corresponding shear flow solution $u(t)$ given in \eqref{eq:shear} does not belong to $W^{1,p}(\bbT^3)$ ($C^\alp(\bbT^3)$, resp.) for any $t>0$.
\end{proposition}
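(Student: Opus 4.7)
The plan is to reduce the regularity question to a single obstruction term appearing in $\partial_{x_2}u(t,\cdot)$ and then to manufacture $u_1,u_3$ that make precisely that term lose regularity. I would first differentiate \eqref{eq:shear} directly to obtain
\begin{equation*}
\partial_{x_2}\bigl[u_3(x_1-tu_1(x_2),x_2)\bigr] = -tu_1'(x_2)(\partial_1 u_3)(x_1-tu_1(x_2),x_2) + (\partial_2 u_3)(x_1-tu_1(x_2),x_2);
\end{equation*}
the other first-order derivatives of $u(t,\cdot)$ are simply $u_1'(x_2)$, $(\partial_1 u_3)(x_1-tu_1(x_2),x_2)$, and $(\partial_2 u_3)(x_1-tu_1(x_2),x_2)$, each of which inherits the norm of a derivative of $u_0$ via the measure-preserving change of variables $(x_1,x_2)\mapsto(x_1-tu_1(x_2),x_2)$. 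The only term that can break regularity is therefore $tu_1'(x_2)(\partial_1 u_3)(x_1-tu_1(x_2),x_2)$.

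For the $W^{1,p}$ claim, translation invariance in $x_1$ gives
\begin{equation*}
\bigl\|tu_1'(x_2)(\partial_1 u_3)(x_1-tu_1(x_2),x_2)\bigr\|_{L^p(\bbT^3)}^p = t^p\int_{\bbT}|u_1'(x_2)|^p g(x_2)\,dx_2, \qquad g(x_2) := \|\partial_1 u_3(\cdot,x_2)\|_{L^p(\bbT)}^p.
\end{equation*}
Fubini forces both $|u_1'|^p$ and $g$ to lie in $L^1(\bbT)$ once $u_0\in W^{1,p}(\bbT^3)$, but it says nothing about the pointwise product. The task thus becomes: arrange both densities to concentrate at a common point (taken to be $x_2=0$) sharply enough that $|u_1'|^p g\notin L^1(\bbT)$, while each factor remains integrable. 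The corresponding task for $C^\alpha$ is to compare $u(t,\cdot)$ at two nearby points chosen to expose the factor $u_1'(x_2)$.

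For the Hölder case the construction is immediate: take $u_1(x_2)=|x_2|^\alpha\chi(x_2)$ and $u_3(x_1,x_2)=|x_1|^\alpha\chi(x_1)\chi(x_2)$ with $\chi$ a smooth cutoff near the origin; both lie exactly in $C^\alpha$, so $u_0\in C^\alpha(\bbT^3)$. For $t>0$, restricting the sheared third component to the line $x_1=0$ yields $t^\alpha|x_2|^{\alpha^2}\chi(x_2)$, whose $C^\alpha$-seminorm diverges as $x_2\to 0$ because $\alpha^2<\alpha$. For $W^{1,p}$ the analogous product ansatz $u_3(x_1,x_2)=F(x_1)G(x_2)$ fails, since 1D Sobolev embedding $W^{1,p}\hookrightarrow L^\infty$ forces $g\in L^\infty$ and hence keeps $|u_1'|^p g$ in $L^1$; one must instead use $u_3$ with a genuinely two-dimensional singularity. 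A workable template is a radial profile $u_3(x_1,x_2)=(x_1^2+x_2^2)^{\beta/2}\chi$ with $\beta\in(1-2/p,\,1-1/p)$, for which a scaling computation yields $g(x_2)\sim|x_2|^{p(\beta-1)+1}$ near $0$; pairing it with $u_1(x_2)=|x_2|^\gamma\chi(x_2)$ for $\gamma$ slightly above $1-1/p$ produces $|u_1'|^p g\sim|x_2|^{p(\gamma+\beta-2)+1}$, and the exponents can be tuned so that this is not integrable.

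The main obstacle is the exponent balancing in the $W^{1,p}$ case: one must simultaneously secure $u_1\in W^{1,p}(\bbT)$, $u_3\in W^{1,p}(\bbT^2)$ (so in particular $\partial_2 u_3\in L^p$, which caps the admissible singularity of $u_3$), and $|u_1'|^p g\notin L^1(\bbT)$, leaving only a narrow window of exponents; logarithmic refinements may be required at endpoint values of $p$. The $C^\alpha$ case sidesteps this entirely because the Hölder norm is a pointwise supremum and it suffices to test along a single line.
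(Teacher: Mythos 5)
Your proposal is correct and follows essentially the same route as the paper: differentiate the shear formula, isolate the cross term $t\,u_1'(x_2)\,(\partial_1 u_3)(x_1-tu_1(x_2),x_2)$, and choose a power-law profile $u_1=|x_2|^{\gamma}$ together with a genuinely two-dimensional radial singularity $u_3=|x|^{\beta}$ (the paper takes $\gamma=1-\tfrac1p+\eps$, $\beta=1-\tfrac2p+\eps$, which sits inside your window), with the same polar-coordinate divergence computation. Your explicit remarks on translation invariance in $x_1$ and on why a tensor-product $u_3$ cannot work in the Sobolev case are nice clarifications but do not change the argument.
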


\begin{proof}
	We only consider the $W^{1,p}$-case with $1\le p<\infty$. Note that \begin{equation*}
	\begin{split}
	\nrm{u_0}_{W^{1,p}}^p \lesssim \int_0^1 |\rd_{x_2}u_1(x_2)|^p dx_1 + \int_0^1\int_0^1 |\rd_{x_1}u_3(x_1,x_2)|^p + |\rd_{x_2}u_3(x_1,x_2)|^p dx_1dx_2
	\end{split}
	\end{equation*} and  {computing $\rd_{x_2}u(t)$ using \eqref{eq:shear}, we obtain that \begin{equation}\label{eq:comp}
	\begin{split}
	\nrm{u(t)}_{W^{1,p}}^p\gtrsim -\nrm{\rd_{x_2}u_3}_{L^p}^{p} +  t^p\int_0^1\int_0^1 |\rd_{x_2}u_1(x_2)|^p|\rd_{x_1}u_3(x_1,x_2)|^pdx_1dx_2. 
	\end{split}
	\end{equation} Given the above formula, let us define $u_1(x_2) =  |x_2|^{1-\frac{1}{p}+\eps}$  for small $\eps>0$ near $x_2=0$ and smooth otherwise. Next, define $u_3(x_1,x_2)=|x|^{1-\frac{2}{p}+\eps}$ near $|x|=0$, where $|x|=\sqrt{x_1^2+x_2^2}$. Since $|\rd_{x_i}u_3| \lesssim |x|^{-\frac{2}{p}+\eps}$ for $i=1,2$, it is clear that $u_0 \in W^{1,p}$. In particular, the first term on the right hand side of \eqref{eq:comp} is bounded.} Furthermore, employing polar coordinates $(r,\tht)$,  \begin{equation*}
	\begin{split}
	\int_0^1\int_0^1 |\rd_{x_2}u_1(x_2)|^p|\rd_{x_1}u_3(x_1,x_2)|^pdx_1dx_2 & \gtrsim \int_0^{r_0} r^{-1+\eps p} r^{-2+\eps p} r dr =+\infty
	\end{split}
	\end{equation*} for $\eps <p^{-1}$. This finishes the proof. 
\end{proof}

Note that for solutions of the form \eqref{eq:shear}, the integrability index may drop instantaneously at $t =0$ but $u(t_1)$ and $u(t_2)$ have the same Sobolev regularity for any $t_1,t_2>0$. (One can see that $u(t)$ belongs to at least $W^{1,\frac{p}{2}}(\bbT^3)$ for any $t>0$.) Similar phenomenon occurs in the scale of $C^\alp$ spaces, see \cite{BT}. This behavior is very different from continuous loss of integrability that our solutions exhibit. Actually, in two dimensions, conservation of $\omg\in L^\infty$ \textit{prohibits} jump of the integrability index, as the following remarkable result due to Elgindi shows: \begin{proposition}[{{see \cite[Lemma 3]{EJ}}}]\label{prop:E}
	Let $\omg_0\in (L^\infty\cap W^{1,p})(\bbT^2)$ with $0<p\le 2$. Then, the Yudovich solution corresponding to $\omg_0$ satisfies \begin{equation*}
	\begin{split}
	\nrm{\omg(t)}_{W^{1,q(t)}} \le \nrm{\omg_0}_{W^{1,p}}
	\end{split}
	\end{equation*} where $q(t)$ is the solution to the ODE \begin{equation*}
	\begin{split}
	\dot{q}(t) = -C\nrm{\omg_0}_{L^\infty} q(t)^2 ,\quad q(0) = p
	\end{split}
	\end{equation*} with some universal constant $C>0$ not depending on $p$. 
\end{proposition}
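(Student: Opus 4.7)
The natural strategy is Lagrangian. Let $\Phi_t$ denote the (volume-preserving) flow generated by $u$. Transport gives $\omg(t,\Phi_t(y))=\omg_0(y)$, so by the chain rule $\nb\omg(t,\Phi_t(y))=(D\Phi_t(y))^{-T}\nb\omg_0(y)$. A change of variables followed by H\"older's inequality with the conjugate pair $(p/q,\,p/(p-q))$ yields, for any $0<q\le p$,
\[
\|\nb\omg(t)\|_{L^q}^q \le \int |(D\Phi_t(y))^{-1}|^q\,|\nb\omg_0(y)|^q\,dy \le \|(D\Phi_t)^{-1}\|_{L^m}^q\,\|\nb\omg_0\|_{L^p}^q, \qquad \tfrac{1}{m}=\tfrac{1}{q}-\tfrac{1}{p}.
\]
The $\|\omg(t)\|_{L^{q(t)}}$ piece of $\|\omg(t)\|_{W^{1,q(t)}}$ is controlled for free: by transport and the inclusion $L^p \subset L^{q(t)}$ on the unit-volume torus, $\|\omg(t)\|_{L^{q(t)}}=\|\omg_0\|_{L^{q(t)}}\le\|\omg_0\|_{L^p}$. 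So the game reduces to bounding $\|(D\Phi_t)^{-1}\|_{L^{m(t)}}$ for a well-chosen $m(t)$.

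For that bound I would start from the Jacobian ODE $\rd_t D\Phi_t = \nb u(t,\Phi_t)\,D\Phi_t$ together with the identity $|(D\Phi_t)^{-1}|_{op}=|D\Phi_t|_{op}$ (a consequence of $\det D\Phi_t\equiv 1$ in two dimensions), to obtain the pointwise Gr\"onwall bound $|D\Phi_t^{\pm 1}(y)|\le\exp\bigl(\int_0^t |\nb u(s,\Phi_s(y))|\,ds\bigr)$. Expanding the exponential as a power series, applying H\"older in time to the $k$-th term, and using that $\Phi_s$ is measure preserving,
\[
\|(D\Phi_t)^{-1}\|_{L^m}^m \le \sum_{k\ge 0} \frac{m^k}{k!}\,t^{k-1}\int_0^t \|\nb u(s)\|_{L^k}^k\,ds.
\]
Inserting the Calder\'on-Zygmund bound $\|\nb u\|_{L^k}\le Ck\|\omg\|_{L^k}$ together with the transport conservation $\|\omg(t)\|_{L^k}\le\|\omg_0\|_{L^\infty}$, and then using Stirling, the right side is dominated by a geometric series in $Cemt\|\omg_0\|_{L^\infty}$. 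This converges whenever that quantity is strictly below $1$, and the bound tends to $1$ as $m\to\infty$.

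To close, I would pick $q(t)$ so that the convergence condition is tight at time $t$: $m(t)=1/(\tfrac{1}{q(t)}-\tfrac{1}{p})$ is taken proportional to $1/(t\|\omg_0\|_{L^\infty})$, i.e., $\tfrac{1}{q(t)}-\tfrac{1}{p}=Ct\|\omg_0\|_{L^\infty}$; differentiating this relation yields exactly the ODE $\dot q=-C\|\omg_0\|_{L^\infty} q^2$ in the statement. The most delicate step I anticipate is securing the bound with no multiplicative constant on the right-hand side: since $|(D\Phi_t)^{-1}|_{op}\ge 1$ pointwise, $\|(D\Phi_t)^{-1}\|_{L^m}$ is always at least $1$ on the torus, so the residual factor has to be absorbed either by a sufficiently generous choice of the constant $C$ in the ODE (making $m(t)$ large enough that $\|(D\Phi_t)^{-1}\|_{L^{m(t)}}=1+o(1)$), or else by replacing the Lagrangian estimate with a direct Eulerian computation of $\tfrac{d}{dt}\|\nb\omg\|_{L^{q(t)}}^{q(t)}$ in which the production term $q\int|\nb u|\,|\nb\omg|^q$ is balanced against the entropy correction $\dot q\int|\nb\omg|^q\log|\nb\omg|$ through an appropriate Young-type inequality.
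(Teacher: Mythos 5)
The paper never proves Proposition \ref{prop:E} internally --- it is quoted verbatim from \cite[Lemma 3]{EJ} --- so there is no in-text argument to compare against; judged on its own terms, your Lagrangian scheme is the standard (and almost certainly the intended) route. The identity $\nb\omg(t,\Phi_t(y))=(D\Phi_t(y))^{-\top}\nb\omg_0(y)$, the H\"older step with $\frac1m=\frac1q-\frac1p$, the Gr\"onwall bound on $D\Phi_t^{\pm1}$, the exponential-moment computation via $\nrm{\nb u}_{L^k}\le Ck\nrm{\omg}_{L^k}\le Ck\nrm{\omg_0}_{L^\infty}$, and the observation that $\frac{1}{q(t)}-\frac1p=Ct\nrm{\omg_0}_{L^\infty}$ is exactly the stated ODE are all correct. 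Two routine points still need to be recorded: the Calder\'on--Zygmund bound with constant $Ck$ requires $k\ge2$ (the terms $k=0,1$ are handled trivially), and since a Yudovich velocity is only log-Lipschitz, the pointwise Jacobian bound must be justified by mollifying $\omg_0$, proving the estimate uniformly, and passing to the limit.

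The one genuine gap is the constant $1$ in the conclusion. Your series bound gives $\nrm{(D\Phi_t)^{-1}}_{L^{m(t)}}^{m(t)}\le S$ with $S=(1-\theta)^{-1}>1$ for $\theta=Cem(t)t\nrm{\omg_0}_{L^\infty}<1$, hence $\nrm{(D\Phi_t)^{-1}}_{L^{m(t)}}\le S^{1/m(t)}=\exp\bigl(Ct\nrm{\omg_0}_{L^\infty}\ln S\bigr)$, which is strictly larger than $1$ for every $t>0$; enlarging the ODE constant $C$ drives $\theta\to0$ but the exponent tends to $Cet\nrm{\omg_0}_{L^\infty}$ rather than to $0$, and subdividing $[0,t]$ and composing merely reproduces the same exponential factor, so your first proposed fix does not close the gap. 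The Eulerian alternative meets the analogous obstruction: after the entropy--exponential duality one is left with the additive John--Nirenberg constant $\ln\int e^{\lambda|\nb u|}\,dx$, which cannot be absorbed when the entropy of the measure $|\nb\omg|^{q}\,dx$ is small. What your argument actually proves is $\nrm{\omg(t)}_{W^{1,q(t)}}\le e^{Ct\nrm{\omg_0}_{L^\infty}}\nrm{\omg_0}_{W^{1,p}}$. That weaker form is entirely sufficient for the only use the paper makes of the proposition (ruling out a downward jump of the integrability index $p(t)$), but if you want the inequality with constant exactly $1$ you must either extract it from the proof in \cite{EJ} or accept the harmless multiplicative factor in the statement.
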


\subsection{Loss of regularity with the Bahouri-Chemin background}\label{subsec:BC}

We shall consider the following vector field $v$ on the positive quadrant $(\bbR_+)^2$: \begin{equation}\label{eq:BC-model}
\begin{split}
v_1(x_1,x_2) = -x_1 \ln \frac{1}{x_2}~, \qquad v_2(x_1,x_2) = x_2 \ln \frac{1}{x_2}~.
\end{split}
\end{equation} This is a toy model for the Bahouri-Chemin velocity field which will be introduced below. Although $v$ is not exactly divergence free (the divergence satisfies $\mathrm{div}(v)=-1$), one can add $x_2$ to $v_2(x_1,x_2)$ to make it divergence-free. (The resulting flow can be shown to demonstrate the same behavior but computations are more tedious.) We have the following 
\begin{proposition}\label{prop:BC-model-flow}
	Let $f_0$ be the function defined in $(\bbR_+)^2$ by $f_0(r,\tht)=\sin(2\tht)$ for $0\le r<1$ in polar coordinates. Then, the solution $f(t)$ to the transport equation  { \begin{equation}\label{eq:transport-BC-model}
	\left\{
	\begin{aligned}
		\rd_t f + v \cdot\nb f = 0, 	& \\
		f(t=0)=f_0 &
	\end{aligned}
	\right.
\end{equation}}satisfies \begin{equation}\label{eq:loss-model}
	\begin{split}
	\nrm{f(t)}_{W^{1,q(t)}} = +\infty
	\end{split}
	\end{equation} and \begin{equation}\label{eq:retain-model}
	\begin{split}
	\nrm{f(t)}_{W^{1,q(t)-\eps}}<\infty, \mbox{ for any } \eps>0
	\end{split}
	\end{equation} for all $t>0$, where $q(t) := \frac{2}{2-\exp(-t)}$. 
\end{proposition}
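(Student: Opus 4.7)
\emph{Approach.} The plan is to solve \eqref{eq:transport-BC-model} explicitly by the method of characteristics (the triangular structure of $v$ makes the ODE decouple) and then estimate $\|\nabla f(t,\cdot)\|_{L^q}^q$ by pulling back to Lagrangian coordinates and passing to polar variables. The critical exponent $q(t)$ will drop out of a single one-dimensional radial integral.

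\emph{Explicit flow.} The $x_2$-equation $\dot x_2=-x_2\ln x_2$ is autonomous; the substitution $y=-\ln x_2$ linearizes it to $\dot y=-y$, giving $x_2(t)=x_2(0)^{e^{-t}}$. Then $\tfrac{d}{dt}\ln x_1=\ln x_2(t)=e^{-t}\ln x_2(0)$ integrates to $x_1(t)=x_1(0)\,x_2(0)^{1-e^{-t}}$. Hence
\[
\Phi_t(y_1,y_2) \,=\, \bigl(\,y_1\,y_2^{1-e^{-t}},\ y_2^{e^{-t}}\,\bigr),\qquad \det D\Phi_t = e^{-t},
\]
consistent with $\mathrm{div}\,v=-1$. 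Since $v$ is time-independent, $f(t,X)=f_0(\Phi_{-t}(X))$.

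\emph{The critical integral.} Writing $f_0=2y_1y_2/(y_1^2+y_2^2)$ on $r<1$ (so that in polar $|\partial_{y_1}f_0|\lesssim \sin\theta/r$ and $|\partial_{y_2}f_0|\lesssim \cos\theta/r$), the chain rule applied to $f=f_0\circ\Phi_{-t}$ yields $\partial_{X_1}f = y_2^{e^{-t}-1}(\partial_{y_1}f_0)(y)$, where $y=\Phi_{-t}(X)$. Changing variables $X=\Phi_t(y)$ with $dX=e^{-t}\,dy$ and using polar coordinates on the $y$-side produces
\[
\|\partial_{X_1}f(t,\cdot)\|_{L^q}^q \,\simeq\, \int_0^1 r^{\,q(e^{-t}-2)+1}\,dr \cdot \int_0^{\pi/2}(\sin\theta)^{qe^{-t}}|\cos 2\theta|^q\,d\theta.
\]
The angular integral is finite; the radial integral equals $\int_0^1 r^{-1}dr=+\infty$ precisely at $q(e^{-t}-2)+1=-1$, i.e.\ at $q=q(t):=2/(2-e^{-t})$, and converges for $q<q(t)$. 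This alone forces \eqref{eq:loss-model}.

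\emph{Regularity below the critical exponent.} To obtain \eqref{eq:retain-model}, I also need $\partial_{X_2}f\in L^{q(t)-\varepsilon}$. The two pieces of $\partial_{X_2}f$ carry prefactors $(1-e^t)y_1 y_2^{-e^{-t}}$ and $e^t y_2^{1-e^{-t}}$, and a crude bound $|\nabla_y f_0|\lesssim 1/r$ produces an angular factor $(\sin\theta)^{-qe^{-t}}$ that fails to integrate as soon as $q\ge e^t$, which is far too restrictive. Using instead the sharper $|\partial_{y_1}f_0|\lesssim\sin\theta/r$ together with $|y_1|\le r$, both pieces are dominated by $r^{-e^{-t}}(\sin\theta)^{1-e^{-t}}$, whose $L^q$-norm is finite for $q<2e^t$; since $q(t)<2\le 2e^t$ for all $t>0$, this imposes no additional constraint. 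The main subtlety of the argument is exactly this final accounting: one must exploit the directional vanishing of $\nabla f_0$ on the coordinate axes in order to defeat the singular anisotropic prefactor $y_2^{-e^{-t}}$ coming from $D\Phi_{-t}$; without it, a spurious threshold strictly below $q(t)$ would appear and \eqref{eq:retain-model} would not follow.
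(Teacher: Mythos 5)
Your proof is correct, but it takes a genuinely different route from the paper's. The paper never writes down the full flow map: it only tracks the image of the diagonal segment $\{(a,a)\}$, finds that it becomes the curve $x_2=x_1^{\gamma}$ with $\gamma=e^{-t}/(2-e^{-t})$, and then deduces \eqref{eq:loss-model} from the level-set/H\"older-duality lower bound of Lemma \ref{lem:lvl-Sob}: since $f(t)\equiv 1$ on that curve while $f(t)\equiv 0$ on the $x_2$-axis, one gets $\int_0^1|\partial_{x_1}f(x_1,x_2)|^q\,dx_1\gtrsim x_2^{-(q-1)/\gamma}$, which fails to be integrable in $x_2$ exactly when $q\ge 1+\gamma=q(t)$; the proof of \eqref{eq:retain-model} is explicitly omitted there. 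You instead compute $\Phi_t$ in closed form, pull back $\nabla f$ by the chain rule, and change variables with the constant Jacobian $e^{-t}$; your exponents check out ($X_2^{1-e^t}=y_2^{e^{-t}-1}$, radial exponent $q(e^{-t}-2)+1$, critical at $q=q(t)$), and the scheme delivers both halves of the proposition at once. In particular you supply the part the paper skips, and you correctly isolate its one real subtlety: the singular prefactor $y_1y_2^{-e^{-t}}$ in $\partial_{X_2}f$ must be beaten by the vanishing $|\partial_{y_1}f_0|\lesssim \sin\theta/r$ on the $y_1$-axis, without which a spurious threshold $q<e^t$ would appear. The trade-off between the two approaches is clear: the paper's argument uses only one-sided information about where a single level set travels, which is exactly what survives in the nonlinear setting of Theorem \ref{thm:torus}, whereas yours exploits the explicit solvability of the linear model and proves strictly more here. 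Two small points to make explicit: for \eqref{eq:loss-model} you need a lower bound, not just $\lesssim$ --- but $|\partial_{y_1}f_0|=2\sin\theta\,|\cos 2\theta|/r$ is an identity on $r<1$, so restricting to a fixed angular sector away from $\theta=0,\pi/4$ gives the divergence; and both conclusions should be read locally near the origin, since for $X$ away from the origin the preimage $\Phi_{-t}(X)$ may exit the region where $f_0$ is prescribed.
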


Before we start the proof, let us observe a convenient lemma which gives a lower bound on the $W^{1,p}$ norm based on the distance between two level sets. 

\begin{lemma}\label{lem:lvl-Sob}
	Assume that $f\in L^\infty(\bbR^2)\cap Lip(\bbR^2\backslash\{0\})$ written in polar coordinates satisfies $f(r,0) = 0$ and $f(r,\tht^*(r))=1$ for all $0<r<r_0$ with some $r_0>0$ where $\tht^*:[0,r_0]\rightarrow \bbR/2\pi\bbZ$ is a measurable function. Then, \begin{equation*}
	\begin{split}
	\nrm{\nb f}_{L^p}^p \ge c \int_0^{r_0} r^{1-p} (\tht^*(r))^{1-p} dr.
	\end{split}
	\end{equation*}
\end{lemma}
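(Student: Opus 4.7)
The plan is to reduce the two-dimensional integral $\|\nabla f\|_{L^p}^p$ to a family of one-dimensional estimates along circular arcs, where $f$ is forced to travel from $0$ to $1$ in angular distance $\theta^*(r)$. Since $f$ is assumed Lipschitz on $\bbR^2\setminus\{0\}$, for each fixed $r\in(0,r_0)$ the restriction $\theta\mapsto f(r,\theta)$ is Lipschitz on $[0,\theta^*(r)]$, so the fundamental theorem of calculus applies along the arc.

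First, I would fix $r\in(0,r_0)$ and write
\begin{equation*}
1 \;=\; |f(r,\theta^*(r))-f(r,0)| \;\le\; \int_0^{\theta^*(r)} |\rd_\theta f(r,\theta)|\,d\theta.
\end{equation*}
The tangential gradient in polar coordinates satisfies $|\rd_\theta f|= r\cdot |r^{-1}\rd_\theta f|\le r|\nabla f|$, so
\begin{equation*}
1 \;\le\; r\int_0^{\theta^*(r)} |\nabla f(r,\theta)|\,d\theta.
\end{equation*}
Next, applying H\"older's inequality on $[0,\theta^*(r)]$ with exponents $p$ and $p/(p-1)$ yields
\begin{equation*}
1 \;\le\; r\,(\theta^*(r))^{1-\frac{1}{p}} \left(\int_0^{\theta^*(r)} |\nabla f(r,\theta)|^p\,d\theta\right)^{1/p},
\end{equation*}
which rearranges to
\begin{equation*}
\int_0^{\theta^*(r)} |\nabla f(r,\theta)|^p\,d\theta \;\ge\; r^{-p}(\theta^*(r))^{1-p}.
\end{equation*}

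Finally, I would integrate this pointwise estimate against the polar Jacobian $r\,dr$ over $r\in(0,r_0)$ and apply Fubini (justified since $\theta^*$ is measurable and $|\nabla f|^p$ is locally integrable away from the origin):
\begin{equation*}
\|\nabla f\|_{L^p}^p \;\ge\; \int_0^{r_0}\!\int_0^{\theta^*(r)} |\nabla f(r,\theta)|^p\, r\,d\theta\,dr \;\ge\; \int_0^{r_0} r^{1-p}(\theta^*(r))^{1-p}\,dr,
\end{equation*}
which is the desired inequality, with constant $c=1$ (the factor only drops if one wishes to compare $|\nabla f|$ with a different pointwise quantity, which would absorb a fixed constant).

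The only delicate point is the weak regularity assumption on $\theta^*$: it is merely measurable, not continuous, so one cannot parametrize a level curve in the classical sense. This is circumvented by the slicing strategy above, which works \emph{separately} on each circle $\{|x|=r\}$ and never requires regularity of $\theta^*$ beyond measurability. The Lipschitz hypothesis on $f$ away from the origin is what guarantees both that $f(r,\cdot)$ is absolutely continuous on $[0,\theta^*(r)]$ for each $r>0$ and that the double integral makes sense.
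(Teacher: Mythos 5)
Your proof is correct and follows essentially the same route as the paper's: the fundamental theorem of calculus along each circular arc from $\tht=0$ to $\tht=\tht^*(r)$, H\"older's inequality in $\tht$, and integration in $r$ against the polar Jacobian. The only cosmetic difference is that you use the pointwise bound $|\rd_\tht f|\le r|\nb f|$ in place of the paper's norm equivalence $\nrm{\nb f}_{L^p}^p \simeq \nrm{\rd_r f}_{L^p}^p + \nrm{r^{-1}\rd_\tht f}_{L^p}^p$, which lets you take $c=1$.
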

\begin{proof}
	The $L^p$-norm of $\nb f$ is equivalent, up to absolute constants, with \begin{equation}\label{eq:Lp-def}
	\begin{split}
	 \nrm{\rd_r f}_{L^p}^p + \nrm{r^{-1}\rd_\tht f}_{L^p}^p. 
	\end{split}
	\end{equation} Note that for any fixed $0<r<r_0$, \begin{equation*}
	\begin{split}
	1 = f(r,\tht^*(r))-f(r,0) = \int_0^{\tht^*(r)} \rd_\tht f (r,\tht') d\tht' \le |\tht^*(r)|^{1-\frac{1}{p}} \left( \int_0^{2\pi} |\rd_\tht f(r,\tht)|^p d\tht \right)^{\frac{1}{p}},
	\end{split}
	\end{equation*} which gives \begin{equation*}
	\begin{split}
	 \int_0^{2\pi} |\rd_\tht f(r,\tht)|^p d\tht  \ge |\tht^*(r)|^{1-p}. 
	\end{split} 
	\end{equation*} Hence \begin{equation*}
	\begin{split}
	\nrm{\nb f}_{L^p}^p \ge c \int_0^{r_0} \int_0^{2\pi} r^{1-p}     |\rd_\tht f(r,\tht)|^p d\tht  dr \ge c \int_0^{r_0} r^{1-p} (\tht^*(r))^{1-p} dr. \qedhere
	\end{split}
	\end{equation*} 
\end{proof}

\begin{proof}[Proof of Proposition \ref{prop:BC-model-flow}]
	We fix some $0<a<1$ and let $\phi(t)=(\phi_1(t),\phi_2(t))$ be the trajectory of the point $(a,a)$ by the flow generated by $v$. Then, from \begin{equation*}
	\begin{split}
	\dot{\phi}_2 = \phi_2 \ln\frac{1}{\phi_2}, 
	\end{split}
	\end{equation*} we have \begin{equation*}
	\begin{split}
	\ln\frac{1}{\phi_2} = e^{-t} \ln\frac{1}{a}.
	\end{split}
	\end{equation*} Next, from \begin{equation*}
	\begin{split}
	\dot{\phi}_1 = -\phi_1\ln\frac{1}{\phi_2} = -\phi_1 e^{-t} \ln\frac{1}{a},
	\end{split}
	\end{equation*} we obtain that \begin{equation*}
	\begin{split}
	\ln\frac{1}{\phi_1} = (2-e^{-t}) \ln\frac{1}{a}. 
	\end{split}
	\end{equation*}  {This shows that the image by the flow map $\Phi(t,\cdot)$ of the diagonal segment $\{ (a,a):0<a<1 \}$ can be parameterized by $$\Gmm(t):=\{ (x_1,x_1^{\frac{\exp(-t)}{2-\exp(-t)}}) : 0<x_1<1 \}$$ for any $t>0$.} (See Figure 1.) That is, the solution $f$ with initial data $f_0(r,\tht)=\sin(2\tht)$ for $r\le1$ satisfies $f(t,\Gmm(t))\equiv 1$. Since $f(t,(0,x_2))\equiv 0$ for all $t$, \eqref{eq:loss-model} follows from a computation similar to the one given in the proof of Lemma \ref{lem:lvl-Sob}: for any fixed $t>0$ and $0<x_2<1$, \begin{equation*}
	\begin{split}
	\int_0^1 |\rd_{x_1}f (x_1,x_2)|^p dx_1 \ge x_2^{-\frac{1}{\gmm}(p-1)}, \quad \gmm = \frac{\exp(-t)}{2-\exp(-t)},
	\end{split}
	\end{equation*} and hence \begin{equation*}
	\begin{split}
	\int_0^1\int_0^1 |\rd_{x_1}f (x_1,x_2)|^p dx_1 dx_2\ge \int_0^1 x_2^{-\frac{1}{\gmm}(p-1)} dx_2,
	\end{split}
	\end{equation*} which is integrable if and only if $-\frac{1}{\gmm}(p-1)>-1$. We omit the proof of \eqref{eq:retain-model}. 
\end{proof}

	\begin{figure}
		\centering
	\includegraphics[scale=0.5]{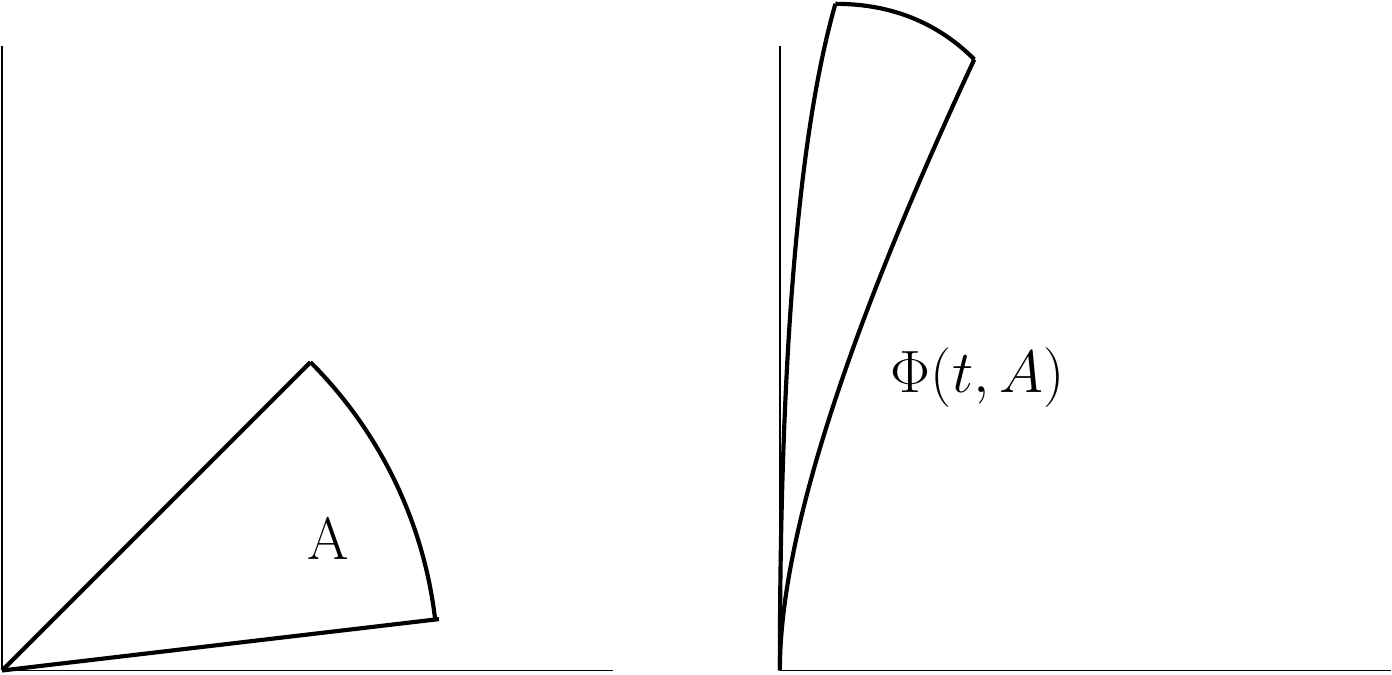}
	\caption{Flow of a ``rectangle'' in polar coordinates: the curves $\{ \tht = \mathrm{const} \}$ becomes instantaneously tangent with the $x_2$-axis for $t>0$.}
\end{figure}

In the explicitly solvable example above, continuous loss of Sobolev regularity occurs from the cusping of a level set for the advected scalar. While it is correct that the velocity corresponding to $\omg_0 = \sin(2\tht)$ has approximately the form \eqref{eq:BC-model} (cf. Lemma \ref{lem:key}), the velocity field immediately changes for $t>0$ in the 2D Euler case, which is a nonlinear problem. Indeed, cusping of the level sets weakens the velocity gradient as some chunk of vorticity moves away from the origin (the integral $I(x)$ in \eqref{eq:key} is reduced), which at the same time slows down the cusping phenomenon. To illustrate this point, one can simply compute the 2D Euler velocity gradient corresponding to $f(t)$ where $f$ is the solution to \eqref{eq:transport-BC-model}. At the origin (which is the point $\nb \nb^\perp \lap^{-1}f$ is supposedly most singular),  \begin{equation*}
\begin{split}
\rd_{x_1}\rd_{x_2}\lap^{-1}f(t,0) \simeq c\int \frac{y_1y_2}{|y|^4} f_0\circ \phi(t,y) dy \simeq c'  \int \frac{y_1y_2}{|y|^4} \frac{\phi_1(y)}{\phi_2(y)} dy \simeq \frac{c''}{t}
\end{split}
\end{equation*} for $0<t\ll 1$, using that $\phi_1(t,y)\simeq y_1y_2^t$ and $\phi_2(t,y)\simeq y_2^{1-t}$. Hence, the velocity corresponding to $f$ becomes Lipschitz continuous instantaneously for $t>0$. 

It is an interesting problem by itself to determine the behavior of the Yudovich solution with initial data $\omg_0\sim \sin(2\tht)$ near the origin. In \cite[Section 6.2]{EJ2}, a formal nonlinear system which models this behavior was introduced. Roughly speaking, the model equation is obtained by replacing the 2D Euler velocity gradient with the main term $I(x)$ in \eqref{eq:key}. The formal model is still not explicitly solvable, but it can be reduced to a second order ODE system with time-dependent coefficients. The numerical solution suggests that cusping of level sets occur but the cusps are not algebraic (that is, of the form $(x_1,x_1^\gmm)$) but just logarithmic of the form $(x_1,x_1(\ln\frac{1}{x_1})^\gmm)$ for some time-dependent $\gmm$. The corresponding velocity indeed \textit{regularizes} with time; it can be argued that $\nrm{\nb u(t)}_{L^\infty}\simeq ct^{-1}$ for $0<t\ll 1$. This does not allow for continuous in time loss of Sobolev regularity for $\omg$. 

Closing this section, let us remark that the situation is simpler when the spatial domain has a boundary. For instance, one can consider instead of $\bbT^2$, the domain $\mathbb{T} \times [0,1]$ which has the exact same Biot-Savart law.\footnote{Note that any smooth solution on $\bbT^2$ can be regarded as a smooth solution on $\mathbb{T} \times [-1,1]$, but the converse holds only when the solution vanishes at the boundary $\bbT\times (\{0\}\cup\{1\})$.} We now have that $f_0 = \cos(\tht) \in W^{1,p}(\mathbb{T} \times [-1,1])$ for any $p<2$ and that under the flow given in \eqref{eq:BC-model}, $f(t)$ converges pointwise to $\mathrm{sgn}(x_1)$. In the context of the 2D Euler equation, this guarantees that the velocity corresponding to $\omg(t)$ with $\omg_0 = f_0$ retains the asymptotic form of \eqref{eq:BC-model} for all $t>0$. For this reason, a very short proof of Theorem \ref{thm:torus} in the $\mathbb{T} \times [-1,1]$-case can be obtained (see the author's thesis \cite[Corollary 2.2.7]{Jthesis}). Alternatively, one can adopt the argument of Zlatos \cite{Zm} who showed merging of level sets for 2D Euler solutions in the disc. 

\section{Proof}\label{sec:proofs}

\subsection{Preliminaries}

We recall the basic log-Lipschitz estimate for the velocity. For a proof, see \cite{MB,MP}. \begin{lemma}\label{lem:log-Lip}
	Let $u = \nb^\perp\lap^{-1}\omg$ with $\omg\in L^\infty(\bbT^2)$. Then, for any $x,x' \in \bbT^2$ with $|x-x'| < 1/2$, \begin{equation}\label{eq:log-Lip}
	\begin{split}
	|u(x)- u(x')| \le C\nrm{\omega}_{L^\infty} |x-x'| \ln \left(\frac{1}{|x-x'|}\right)~.
	\end{split}
	\end{equation} 
\end{lemma}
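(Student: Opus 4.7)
The plan is to exploit the Biot--Savart representation together with a three-region decomposition in terms of the distance $\delta := |x-x'|$, which is standard for log--Lipschitz estimates on $\bbR^2$; the only minor care needed is to account for the torus geometry.

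First, I would write $u(x) = \int_{\bbT^2} K(x,y)\,\omg(y)\,\ud y$, where $K(x,y)$ is the Biot--Savart kernel on $\bbT^2$. The key properties are that, locally near the diagonal, $K(x,y)$ differs from the Euclidean kernel $\frac{1}{2\pi}\frac{(x-y)^\perp}{|x-y|^2}$ by a smooth function, so that $|K(x,y)| \le C|x-y|^{-1}$ and $|\nb_x K(x,y)| \le C|x-y|^{-2}$ for $|x-y| \le 1/2$, while both are globally bounded once $|x-y|$ is comparable to the diameter of $\bbT^2$. This reduction is classical (see \cite{MB,MP}) and I would invoke it rather than reprove it.

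Second, fix $x,x'$ with $\dlt := |x-x'| < 1/2$, and split $\bbT^2$ into three pieces: the near region $A = B(x,2\dlt)\cup B(x',2\dlt)$, the intermediate annulus $B = \{y : 2\dlt \le |x-y| \le 1/4\}$, and the far region $C = \bbT^2 \setminus (A\cup B)$. On $A$, I would estimate each term separately using the bound $|K(\cdot,y)| \le C|x-y|^{-1}$ (and likewise from $x'$), and compute
\[
\int_{B(x,2\dlt)} \frac{1}{|x-y|}\,\ud y + \int_{B(x',2\dlt)} \frac{1}{|x'-y|}\,\ud y \le C\dlt,
\]
yielding a contribution $\le C\nrm{\omg}_{L^\infty}\dlt$. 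On $B$, the segment from $x$ to $x'$ stays at distance $\ge \dlt$ from $y$, so the mean value theorem gives $|K(x,y)-K(x',y)| \le C\dlt\,|x-y|^{-2}$, and integrating in polar coordinates around $x$,
\[
\int_B \frac{\dlt}{|x-y|^2}\,\ud y \le C\dlt \int_{2\dlt}^{1/4} \frac{\ud r}{r} \le C\dlt \ln\!\left(\frac{1}{\dlt}\right).
\]
On $C$, both $K(x,\cdot)$ and $K(x',\cdot)$ are smooth and uniformly bounded with bounded derivative, so the difference is $\le C\dlt$ per point, giving a contribution $\le C\nrm{\omg}_{L^\infty}\dlt$.

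Combining the three pieces and absorbing the pure $\dlt$ terms into $\dlt\ln(1/\dlt)$ (using $\dlt < 1/2$) yields \eqref{eq:log-Lip}. There is no real obstacle here; the only point needing care is the mean value step on $B$, where one must ensure the connecting segment stays well away from the singularity of $K$, which is precisely why the inner radius of $B$ is chosen to be $2\dlt$ rather than $\dlt$.
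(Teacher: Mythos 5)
Your proof is correct and is the standard three-region argument for the log--Lipschitz estimate; the paper itself does not prove this lemma but simply cites \cite{MB,MP}, where essentially this same decomposition appears. The only cosmetic point is that your regions $A$ and $B$ overlap slightly (since $B(x',2\dlt)$ meets the annulus $\{2\dlt\le|x-y|\le 1/4\}$), so one should integrate over $B\setminus A$ in the middle step, which changes nothing in the estimates.
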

The key technical tool is the following lemma (see Kiselev--Sverak \cite{KS}, Zlatos \cite{Z}, Elgindi--Jeong \cite{EJ2,EJSVP2}), which is in some sense complementary to the previous log-Lipschitz estimate. \begin{lemma}\label{lem:key}
	Let $\omega \in L^\infty( \mathbb{T}^2 )$ be odd with respect to both axes. For any $x = (x_1,x_2)$ with $x_1,x_2 \in (0,1/2]$, we have \begin{equation}\label{eq:key}
	\begin{split}
	(-1)^j\frac{ u_j(x)}{x_j} = I(x)+ B_j(x) 
	\end{split}
	\end{equation} where \begin{equation}\label{eq:I-def}
	\begin{split}
	I(x):= \frac{4}{\pi} \int_{[2x_1,1]\times[2x_2,1]} \frac{y_1y_2}{|y|^4} \omega(y) dy
	\end{split}
	\end{equation}  and \begin{equation}\label{eq:B-est}
	\begin{split}
	\left| B_j(x)  \right| \le C \nrm{\omega}_{L^\infty}  \ln\left( 10 + \frac{x_{3-j}}{x_j} \right)  , \qquad j = 1, 2. 
	\end{split}
	\end{equation} 
\end{lemma}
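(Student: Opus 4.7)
The plan is the standard strategy for expansions of this type (cf.\ \cite{KS, Z, EJ2}): represent $u_j$ via the Biot--Savart law, use the odd symmetries of $\omg$ to fold the integral into the first quadrant, and extract the leading behavior from the resulting symmetrized kernel. Specifically, write $u(x) = \int_{\bbT^2}\nabla_x^\perp G_{\bbT^2}(x-y)\,\omg(y)\,dy$ with $G_{\bbT^2}(z) = \frac{1}{2\pi}\log|z| + R(z)$ and $R$ smooth and periodic; the contribution of $R$ is trivially $O(\nrm{\omg}_{L^\infty})$ and goes into $B_j$. Applying the substitutions $y_i \mapsto -y_i$ together with the oddness of $\omg$, the remaining integral becomes
\[
u_j(x) = \int_{[0,1/2]^2} \tilde K_j(x, y)\,\omg(y)\,dy + O(\nrm{\omg}_{L^\infty}), \qquad \tilde K_j(x,y) := \sum_{\epsilon\in\{\pm 1\}^2} \epsilon_1\epsilon_2\, K_j^{\mathrm{pl}}(x - (\epsilon_1 y_1, \epsilon_2 y_2)),
\]
where $K_j^{\mathrm{pl}}$ is the $j$-th component of the planar Biot--Savart kernel $\nabla^\perp(\frac{1}{2\pi}\log|\cdot|)$. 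The key structural point is that the alternating sum in $\epsilon$ kills the $O(|y|^{-1})$ far-field behavior of $K_j^{\mathrm{pl}}$, leaving $\tilde K_j$ of size $O(x_j\,|y|^{-3})$ at infinity.

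Next, partition $[0,1/2]^2$ into the far region $R_f(x) := [2x_1, 1]\times[2x_2, 1]$ and the near region $R_n(x) := [0, 1/2]^2 \setminus R_f(x)$. On $R_f$, Taylor expand each $K_j^{\mathrm{pl}}(x - \epsilon\cdot y)$ to second order in $x$ around the base point $-\epsilon\cdot y$. The $O(1)$ and $O(|x|)$ contributions cancel in the alternating sum against $\epsilon_1\epsilon_2$, by the parities of $K_j^{\mathrm{pl}}$ and its first derivatives in each coordinate; the surviving $O(|x|^2)$ piece comes only from the mixed cross-partial $\partial_{x_1}\partial_{x_2}G$ and gives
\[
\tilde K_j(x, y) = (-1)^j \frac{4\, x_j\, y_1 y_2}{\pi\, |y|^4} + E_j(x, y), \qquad |E_j(x, y)| \lesssim \frac{|x|^2}{|y|^3}, \quad y \in R_f(x).
\]
Integrating the main term against $\omg$ over $R_f(x)$ produces exactly $(-1)^j x_j\, I(x)$, while $\int_{R_f(x)} |E_j(x,y)|\,dy \lesssim |x|$, so integrating $|E_j|\nrm{\omg}_{L^\infty}$ yields a contribution of size $O(x_j \nrm{\omg}_{L^\infty})$, absorbed into $B_j$.

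Finally, on the near region $R_n(x)$, decompose it into the small corner square $[0,2x_1]\times[0,2x_2]$ together with the two L-shaped strips $\{y_j\le 2x_j\}\cap R_n(x)$. Since $\tilde K_j$ is odd in each $y_i$ by construction, it carries a factor of $y_1 y_2$ near the origin (giving integrability in the corner square), while pairwise cancellations between the four reflected kernels tame the $|x-y|^{-1}$ singularity of $K_j^{\mathrm{pl}}(x-y)$ at the diagonal (giving integrability in the strips). A careful but routine estimation of these four terms yields
\[
\int_{R_n(x)} |\tilde K_j(x,y)|\,dy \le C\,x_j\,\log\!\left(10 + \frac{x_{3-j}}{x_j}\right),
\]
and the lemma follows after dividing by $x_j$. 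The main obstacle is precisely this near-region estimate: bounding each of the four reflected kernels separately gives only an algebraic (power-law) dependence on the ratio $x_{3-j}/x_j$, and extracting the sharp asymmetric log factor requires genuinely exploiting the $\pm$ cancellations within $\tilde K_j$---both the antisymmetry of the four images at the coordinate axes and the pairwise cancellation across the reflections near the diagonal $y = x$.
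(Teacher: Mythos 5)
The paper does not actually prove Lemma~\ref{lem:key}; it quotes it from Kiselev--Sverak, Zlato\v{s} and Elgindi--Jeong. So you are reconstructing the standard argument, and your architecture is the right one: write the torus Green's function as $\frac{1}{2\pi}\log|z|$ plus a smooth remainder, fold the Biot--Savart integral into the quadrant using the odd-odd symmetry, split at $R_f=[2x_1,1]\times[2x_2,1]$, and identify the main term with the mixed partial $\partial_1\partial_2\log|y|$. But your bookkeeping of the Taylor expansion on $R_f$ is off by one order: only one of the two first-order terms cancels by parity (namely $x_{3-j}\partial_{3-j}K_j^{\mathrm{pl}}$, whose derivative is even in each coordinate); the other first-order term $x_j\partial_j K_j^{\mathrm{pl}}$ is odd in both coordinates, survives the alternating sum, and \emph{is} the main term $(-1)^j\tfrac{4x_jy_1y_2}{\pi|y|^4}$. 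In particular $\tilde K_j$ is of size $x_j|y|^{-2}$ at infinity, not $O(|x|^2)$ or $O(x_j|y|^{-3})$ as you claim --- if it were, $I(x)$ would stay bounded as $x\to0$ and the whole Bahouri--Chemin mechanism would disappear. Your displayed formula is correct, but it contradicts the narrative around it.

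The genuine gap is quantitative and occurs in the step you claim to close. From $|E_j|\lesssim|x|^2/|y|^3$ one gets $\int_{R_f}|E_j|\,dy\lesssim|x|^2\int_{|y|\ge 2|x|}|y|^{-3}dy\lesssim|x|\simeq\max(x_1,x_2)$, which is \emph{not} $O(x_j)$: after dividing by $x_j$ this contributes $1+x_{3-j}/x_j$ to $B_j$, a power of the aspect ratio rather than the logarithm $\ln(10+x_{3-j}/x_j)$ demanded by \eqref{eq:B-est}. No isotropic Taylor expansion can repair this, since every remainder $|x|^k|y|^{-k-1}$ integrates over $\{|y|\ge2|x|\}$ to $\simeq|x|$. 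The sharp logarithm requires an anisotropic argument: in the regime $x_j\ll x_{3-j}$ one uses only the reflection $y_j\mapsto-y_j$ (gaining a factor $\simeq x_j/|y|$ while keeping the other variable intact) and integrates the resulting kernel $\simeq x_j|y|^{-2}$ over the intermediate annulus $x_j\lesssim|y|\lesssim x_{3-j}$; this is precisely where $x_j\ln(x_{3-j}/x_j)$ comes from. You correctly sense that the asymmetric log forces one to exploit the $\pm$ cancellations, but you locate the difficulty only in $R_n$ and then assert the key bound $\int_{R_n}|\tilde K_j|\le Cx_j\ln(10+x_{3-j}/x_j)$ without proof --- so the hardest step of the lemma is left unexecuted, while the step you do execute ($R_f$) is closed with an insufficient estimate. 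Two further small points: the smooth remainder $R$ must contribute $O(x_j\nrm{\omg}_{L^\infty})$ to $u_j$, not merely $O(\nrm{\omg}_{L^\infty})$, before you divide by $x_j$ (this follows from the same alternating sum once you use that $R$ is even in each coordinate, but it has to be said); and the far-field main term should be checked against the toy field \eqref{eq:BC-model} to confirm the sign convention, since the factor $(-1)^j$ is exactly the kind of thing the parity computation above either confirms or refutes.
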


\subsection{Proof of Theorem \ref{thm:torus}}

In the proof, we shall take the initial vorticity $\omg_0\in L^\infty$ to be non-negative on $[0,1]^2$ and has odd symmetry with respect to both axes; that is, \begin{equation*}
 \begin{split}
 \omega_0(x_1,x_2) = -\omega_0(-x_1,x_2) = -\omega_0(x_1,-x_2), \qquad x_1, x_2 \in \bbT = [-1,1).
 \end{split}
 \end{equation*} The unique solution retains both the sign property and odd symmetry. For this reason, it suffices to specify the solution (and the data) only in the region $[0,1]^2$. Moreover, we shall normalize $\nrm{\omg_0}_{L^\infty}=1$. 
 
Our initial data on $[0,1]^2$ will be even symmetric with respect to $x_1=x_2$, and in the region $0<x_1<x_2$, we define using polar coordinates  {with a small $\beta>0$ to be specified below:} \begin{equation*}
\begin{split}
\omg_0(r,\tht) = \chi(r) \begin{cases}
r^{-\beta}\tht &\quad 0\le\tht <r^\beta,\\
1 &\quad r^\beta\le \tht. 
\end{cases} 
\end{split}
\end{equation*} Here, $\chi(r)$ is a smooth cutoff function satisfying $\chi(r)=1$ for $0<r<\frac{1}{2}$. Using \eqref{eq:Lp-def}, it is easy to see that $\omg_0 \in W^{1,p}$ if and only if $p<1+\frac{1}{1+\beta}$. Therefore, we  {may define} $\beta$ to satisfy $p_0 = 1+\frac{1}{1+\beta}$, where $p_0<2$ is given in the statement of Theorem \ref{thm:torus}. 
 
To begin with,  {we introduce the flow map: given $x$, $\Phi(t,x)$ is defined by the solution to the ODE \begin{equation*}
		\begin{split}
			\frac{d}{dt}\Phi(t,x) = u(t,\Phi(t,x)), \qquad \Phi(0,x)=x.
		\end{split}
	\end{equation*}
}
Now, integrating \eqref{eq:log-Lip} in time, one can obtain the following flow estimate which is valid for $0<t<t_0$ and $|x-x'|<\frac{1}{4}$ with some absolute constants  {$t_0, c>0$}: \begin{equation*}
\begin{split}
|x-x'|^{1+ct} \le |\Phi(t,x)-\Phi(t,x')|\le |x-x'|^{1-ct}
\end{split}
\end{equation*} and in particular, by taking $x' = 0$, \begin{equation}\label{eq:radial-bds}
\begin{split}
|x |^{1+ct} \le |\Phi(t,x) |\le |x |^{1-ct}.
\end{split}
\end{equation}


We now prove that for $0<t<t_0$ and $0<r<\dlt_0$, $\omg(t)\equiv 1$ on the region $$\bbA:=\{ (r,\tht): 0<r<\dlt_0, \frac{\pi}{20}<\tht<\frac{\pi}{4} \}$$ for $t_0$ and $\dlt_0$ taken sufficiently small. In the following we shall take them (in a way depending only on other absolute constants) smaller whenever it becomes necessary. Towards a contradiction, assume that there is a point $x_0\in[0,1]^2$ with $\omg_0(x_0)<1$ and $0<t<t_0$ that $\Phi(t,x_0)\in \bbA$. For $\omg_0(x_0)<1$, $x_0=(r_0,\tht_0)$ (in polar coordinates) should satisfy at least one of the following: (i) $r_0>\frac{1}{2}$, (ii) $\tht_0<r_0^\beta$, (iii) $\frac{\pi}{2} - \tht_0<r_0^\beta$. By arranging $\dlt_0,t_0$ small, we can exclude the possibility of (i) using \eqref{eq:radial-bds}. Now assume (ii) holds. We may further assume that $r_0<\sqrt{\dlt_0}$ by taking $t_0$ smaller if necessary. Writing $\Phi(t,x_0) = (r(t),\tht(t))$ in polar coordinates and applying \eqref{eq:log-Lip} with $x = \Phi(t,x_0)$ and $x' = (r(t),0)$, \begin{equation*}
\begin{split}
|u^\tht(t,\Phi(t,x_0))|\le Cr(t)\tht(t) \ln \left( \frac{1}{r(t)\tht(t)}\right). 
\end{split}
\end{equation*} Here $u^\tht = u \cdot e^\tht$ is the angular component of the velocity and we have used that it vanishes on the axes, due to the odd symmetry of $\omg$. Hence we can bound \begin{equation*}
\begin{split}
\left|\frac{d}{dt} \tht(t) \right| = \frac{|u^\tht(t,\Phi(t,x_0))|}{|\Phi(t,x_0)|} \le C\tht(t) \ln \left( \frac{1}{r(t)\tht(t)}\right)\le C\tht(t) \ln \left( \frac{1}{r_0^{1+ct}\tht(t)}\right). 
\end{split}
\end{equation*} Now we may take $t_0$ small that $1+ct<2$ for all $t\le t_0$ and then \begin{equation*}
\begin{split}
\frac{d}{dt} \ln\left( \frac{1}{\tht(t)} \right) \ge -C\left( \ln\left(\frac{1}{r_0}\right) + \ln\left(\frac{1}{\tht(t)}\right) \right),
\end{split}
\end{equation*} which gives \begin{equation*}
\begin{split}
 \ln\left( \frac{1}{\tht(t)} \right) & \ge e^{-Ct}  \ln\left( \frac{1}{\tht_0} \right) + (e^{-Ct}-1)  \ln\left( \frac{1}{r_0} \right)  \ge (1-Ct) \ln\left( \frac{1}{\tht_0} \right) -Ct \ln\left( \frac{1}{r_0} \right) \\
 &\ge  (1-Ct-\frac{Ct}{\beta}) \ln\left( \frac{1}{\tht_0} \right),
\end{split}
\end{equation*} where in the last step we have used that $\tht_0< r_0^\beta$. Therefore we can guarantee that for all $t\le t_0$ and $r_0<\dlt_0$, \begin{equation*}
\begin{split}
\tht(t) \le \tht_0^{1-Ct(1+\frac{1}{\beta})} < r_0^{\beta(1-Ct(1+\frac{1}{\beta}) )} < \dlt_0^{\frac{\beta}{2}(1-Ct(1+\frac{1}{\beta}) )} <\frac{\pi}{20}
\end{split}
\end{equation*} by taking $t_0,\dlt_0$ smaller if necessary. This gives the desired contradiction. The proof that (iii) is impossible can be done similarly. 

Let us follow the trajectory of the  {curve} $\{ (r_0,\frac{\pi}{2}-r_0^\beta)_* : 0<r_0< \dlt_0^4 \}$, on which $\omg_0\equiv 1$.  {Here and in the following, let us use the polar coordinates with notation $(r,\tht)_* = (r\cos\tht, r\sin\tht)$. For convenience, let us define}
$$(r(t),{\tht}(t))_* :=\Phi(t,(r_0,\frac{\pi}{2}-r_0^\beta)_*).$$ We can assume that on $[0,t_0]$, $r(t)<\dlt_0^2$ for any $r_0<\dlt_0^4$. Moreover, on the diagonal $\{ (a,a) : a < {\dlt_0^2}  \}$, we may compute that \begin{equation*}
\begin{split}
I(t,(a,a)) &= \frac{4}{\pi}\int_{ [2a,1]\times[2a,1]} \frac{y_1y_2}{|y|^4} \omg(t,y)dy \ge \frac{4}{\pi}\int_{\bbA \cap ([{2\dlt_0^2},1]\times[ {2\dlt_0^2},1] ) } \frac{y_1y_2}{|y|^4}dy  \\
&\ge c \int_{2\dlt_0^2}^{\dlt_0} \int_{2\dlt_0^2}^{y_1} \frac{y_2}{|y_1|^3} dy_2 dy_1 \ge c\ln\frac{1}{\dlt_0} - C,
\end{split}
\end{equation*}  {with some constants $c, C>0$,} using that $\bbA \cap ([{2\dlt_0^2},1]\times[ {2\dlt_0^2},1] ) $ contains the region $ \{ (y_1,y_2): 2\dlt_0^2<y_1<\dlt_0, 2\dlt_0^2<y_2<y_1 \}$. Hence, for $\dlt_0>0$ sufficiently small, the term $I(t,(a,a))$ can  {``dominate''} $B_j(t,(a,a))$ for $j = 1, 2$ in \eqref{eq:key}.  {To be precise, taking $\dlt_0>0$ smaller if necessary, it follows from the error estimate \eqref{eq:B-est} with $x = (a,a)$ that \begin{equation*}
	\begin{split}
		|B_j(t,(a,a))| \le \frac{1}{10}I(t,(a,a)), \qquad j=1, 2. 
	\end{split}
\end{equation*}} This implies that the velocity is pointing northwest on the diagonal, which guarantees in particular that $ {\tht}(t) > \frac{\pi}{4}$. Now, along the trajectory $(r(t),\tht(t))_*$, we compute using \eqref{eq:key} that \begin{equation*}
\begin{split}
\dot{\tht}(t) = \frac{u^\tht(t,(r(t),\tht(t))_*)}{|r(t)|} & = \frac{1}{2}\sin(2\tht(t))\left( 2I + B_1 + B_2 \right) \\
& \ge \sin(2\tht(t))\left( I(t,(r(t),\tht(t))_*)  - C(1+\ln \frac{1}{\frac{\pi}{2}-\tht(t)}) \right).
\end{split}
\end{equation*} Introducing $\bar{\tht}(t) = \frac{\pi}{2}-\tht(t)$, the above inequality becomes \begin{equation}\label{eq:lb}
\begin{split}
\dot{\bar{\tht}}(t) \le - 2\bar{\tht}(t) \left(  {I(t,(r(t),\tht(t))_*)  } - C(1 + \ln\frac{1}{\bar{\tht}(t)}) \right). 
\end{split}
\end{equation} We now estimate  {$I(t,(r(t),\tht(t))_*)$} from below, using $\omg(t) \equiv 1$ on $\bbA$. Note that \begin{equation*}
\begin{split}
\bbA \cap ([{2r(t)\cos(\tht(t))},1]\times[ {2r(t)\sin(\tht(t))},1] )\supset \{ (y_1,y_2): 3r(t)<y_1<\dlt_0, 3r(t)<y_2<y_1 \}
\end{split}
\end{equation*} and using $r(t)\le r_0^{1-ct}$, \begin{equation*}
\begin{split}
I(t,(r(t),\tht(t))_*) \ge c(1-ct)\ln\left( \frac{1}{r_0} \right)= \frac{c}{\beta}(1-ct)\ln\left( \frac{1}{\bar{\tht}_0} \right)
\end{split}
\end{equation*} where the constant $c>0$ depends on $\dlt_0$. Hence, as long as we have \begin{equation}\label{eq:ansatz}
\begin{split}
\frac{c}{\beta}(1-ct)\ln\left( \frac{1}{\bar{\tht}_0} \right) > 2C(1 + \ln\frac{1}{\bar{\tht}(t)})
\end{split}
\end{equation}  where $C>0$ is the constant from \eqref{eq:lb}, we have \begin{equation*}
\begin{split}
\dot{\bar{\tht}}(t) \le -\frac{c}{\beta}  {(1-ct)}\bar{\tht}(t) \ln \frac{1}{\bar{\tht}_0}
\end{split}
\end{equation*} for $t\le t_0$ by taking $t_0$ smaller and therefore \begin{equation*}
\begin{split}
\bar{\tht}(t) \le \bar{\tht}_0^{1+\frac{ct}{\beta}} = r_0^{\beta+ct} \le (r(t))^{(1-ct)(\beta+ct)}.
\end{split}
\end{equation*} Note that for $\beta>0$ sufficiently small, \eqref{eq:ansatz} is satisfied for $t\in[0,t_0]$, again by taking $t_0$ smaller  {to satisfy $ct\le ct_0<\frac18$,} if necessary. Moreover,  {$(1-ct)(\beta+ct) \ge \beta + c_0t$ with $c_0 = \frac{c}{8}$ if $0<\bt<\frac12$ and $ct_0<\frac18$}. Recall that \begin{equation*}
\begin{split}
p_0 = 1 + \frac{1}{1+\beta} > p^* 
\end{split}
\end{equation*} and by taking $2>p^*$ closer to 2, we can guarantee \eqref{eq:ansatz}. Recalling that $\omg(t,\cdot)\equiv 1$ on $(r(t),\frac{\pi}{2}-\bar{\tht}(t))$ and applying Lemma \ref{lem:lvl-Sob} gives that \begin{equation*}
\begin{split}
\omg(t)\notin W^{1,q(t)}(\bbT^2),\quad q(t) = 1 + \frac{1}{1+\beta+c_0t} .
\end{split}
\end{equation*} This finishes the proof.

\subsection{Discussions}\label{subsec:disc}

Let us close the paper with commenting on related issues that we have not touched upon so far.

\subsubsection*{Loss of regularity with $H^1$ initial vorticity.} It is not clear to us whether it is possible for $\omg_0\in H^1\cap L^\infty$ to lose $W^{1,p}$ regularity with time. Let us briefly discuss the main difficulty. Note that as a consequence of Lemma \ref{lem:lvl-Sob}, $\omg_0 \notin H^1(\bbT^2)$ if $f$ takes on different constant values along two half-lines emanating from the origin. That is, insisting on the odd-odd scenario forces the vorticity to vanish near the origin. A natural choice, which was used in \cite{EJ} is to take roughly $\omg_0(r,\tht)\sim (\ln\frac{1}{r})^{-\gmm}\sin(2\tht)$. Then one can check that the corresponding velocity gradient satisfies $|\nb u_0(r,\tht)|\sim (\ln\frac{1}{r})^{1-\gmm}$, and even the passive transport with $u_0$ is not strong enough to remove the vorticity from $W^{1,p}$ with any $p<2$. 
After the completion of this work, we have learned about an interesting recent paper \cite{Hung} which proves that (see Theorem 3.1 therein) for $\omega_0\in H^1\cap C^0$, we have $\omega(t) \in W^{1,p} \cap W^{\alp,2}$ for any $p<2$ and $\alp<1$. That is, continuity (opposed to mere boundedness) of the vorticity makes the situation completely different. 

\subsubsection*{Loss of regularity in negative H\"older spaces.} 

Another natural question to ask is whether there exists initial vorticity $\omg_0\in C^{-\alp}(\bbT^2)$ with a solution $\omg(t) \notin C^{-\alp}(\bbT^2)$ for $t>0$. There are serious essential difficulties in proving such a statement; now the initial vorticity is necessarily unbounded and the uniqueness of solution is not guaranteed (see very recent progress in \cite{V1,V2,Bre,BV,Elling2016}). On the other hand, any $L^{p,q}$ norms of the vorticity is preserved for any solution, which makes it hard to lose H\"older regularity.

{\subsubsection*{Loss of regularity for active scalars.} 

It will be an interesting problem to extend loss of regularity to active scalar equations, most notably to the case of SQG (surface quasi-geostrophic) equations. In this case, an analogue of Yudovich theorem is not available and therefore one should develop a new strategy.  
}

\subsection*{Acknowledgement}

\noindent  IJ has been supported  by the New Faculty Startup Fund from Seoul National University, the Science Fellowship of POSCO TJ Park Foundation, and the National Research Foundation of Korea grant (No. 2019R1F1A1058486). {We sincerely thank the anonymous referees for their kind words and numerous suggestions which significantly improved the readability of the paper.} The author states that there is no conflict of interest.

\bibliographystyle{amsplain}


\end{document}